\newcommand*{\field}[1]{\mathbb{#1}}
\newcommand*{\Q}{\field{Q}}
\newcommand*{\R}{\field{R}}
\newcommand*{\C}{\field{C}}
\newcommand*{\Z}{\field{Z}}
\newcommand*{\N}{\field{N}}
\newcommand*{\HH}{\mathbb{H}}
\newcommand*{\group}[1]{\mathrm{#1}}
\newcommand*{\SL}{\group{SL}}
\newcommand*{\nP}{r}
\DeclarePairedDelimiter{\abs}{\lvert}{\rvert}
\newtheorem{Theorem}{Theorem}
\newtheorem{BThm}{Theorem} 
\newtheorem{Lemma}{Lemma}
\newtheorem{Corollary}{Corollary}
\newtheorem*{rmk*}{Remark}
\title{On Products of Fourier Coefficients of Cusp Forms}
\author{Eric Hofmann, Winfried Kohnen} 
 \date{}
\begin{document}
\maketitle

\begin{abstract}The purpose of this paper is to study products of Fourier coefficients 
of an elliptic cusp form, $a(n)a(n+r)$ $(n \geq 1)$ for a fixed positive integer $r$,
concerning both non-vanishing and non-negativity.\footnote{ 2010 \textit{Mathematics Subject 
Classification:} 11F12, 11F30 }
\end{abstract}
\section{Introduction}
Let $f$ be an elliptic cusp form of positive integral weight $k$ with real Fourier coefficients 
$a(n)$ $(n\geq 1)$. Let $r$ be a fixed positive integer. Then, the purpose of this paper is to study 
the products $a(n) a(n+r)$ $(n\geq1)$, regarding both non-vanishing and non-negativity. For a 
precise statement of our results see section \ref{sec:results} below. 

We remark that these products have been previously investigated under different aspects, 
namely first by Selberg \cite{Selb} and by Good \cite{Good} who studied growth properties of the 
sums $\sum_{n\leq x} a(n)a(n+r)$ where $x \rightarrow \infty$, and more recently by Hoffstein and 
Hulse \cite{Hofs} in connection with shifted Dirichlet convolutions.

\section{Statement of results}\label{sec:results}

Let $\HH$ denote the upper half-plane of complex numbers $z = x + iy$, with $y>0$. The modular 
group $\Gamma_1= \SL_2(\Z)$ acts on $\HH$ by fractional linear transformations, as usual.

In the following, we denote by $\Gamma$ a discrete subgroup of $\Gamma_1$, which satisfies the 
following conditions \citep[cf.][p.\ 98]{Good}:
\renewcommand{\labelenumi}{(\roman{enumi})}
\begin{enumerate}\label{conditions}
 \item $\Gamma$ is a finitely generated Fuchsian group of the first kind.
 \item The negative identity matrix is contained in $\Gamma$.
 \item $\Gamma$ contains $M = \left(\begin{smallmatrix} 
                                     1 & b \\ 0 & 1
                                    \end{smallmatrix}\right)$
  exactly if $b$ is an integer.
\end{enumerate}
Condition (iii) means that $\Gamma$ has a cusp at $i\infty$, the stabilizer of which is 
generated by the two matrices $\pm \left( \begin{smallmatrix} 1 & 1 \\ 0 & 1 \end{smallmatrix}  
 \right)$.

We shall prove the following theorem
\begin{Theorem}\label{thm:main_thm}
Let $f$ be a cusp form of integer weight $k> 2$ on $\Gamma$ with real Fourier coefficients $a(n)$ 
$(n\geq 1)$. Let $r \in \N$ and assume that $\left( a(n) a(n+r) \right)_{n\geq 1}$ is not 
identically zero. Then, in fact, infinitely many terms of the sequence  $\left( a(n) a(n+r) 
\right)_{n\geq 1}$ are non-zero.
\end{Theorem}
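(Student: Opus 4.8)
The plan is to realize the products $a(n)a(n+r)$ as the $x$-Fourier coefficients of the $\Gamma$-invariant function $V(z)=y^{k}\lvert f(z)\rvert^{2}$, and then to feed the resulting shifted-convolution Dirichlet series into the Rankin--Selberg/spectral machinery of Selberg and Good. Since $f$ is a cusp form of weight $k$, the function $V$ is bounded, real-analytic and $\Gamma$-invariant, hence lies in $L^{2}(\Gamma\backslash\HH)$ and is non-negative; computing its expansion in $x=\operatorname{Re}z$ gives, for the fixed $r$,
\[
A_{r}(y):=\int_{0}^{1}V(x+iy)\,e^{-2\pi i r x}\,dx=y^{k}\sum_{n\ge 1}a(n)a(n+r)\,e^{-2\pi(2n+r)y}.
\]
Thus the whole sequence $\bigl(a(n)a(n+r)\bigr)_{n\ge1}$ is encoded in $A_{r}$, whose Mellin transform is, up to elementary Gamma factors, the shifted Dirichlet series $D_{r}(w)=\sum_{n\ge1}a(n)a(n+r)(2n+r)^{-w}$, convergent for $\operatorname{Re}w$ large. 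I would argue by contradiction: assuming only finitely many products are non-zero, I will show $D_{r}$ is forced to be entire, and then contradict this using the genuine singularities $D_{r}$ must carry.

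Under the assumption that $a(n)a(n+r)=0$ for all $n>N$, the sum defining $A_{r}$ is finite, so $D_{r}(w)=\sum_{n\le N}a(n)a(n+r)(2n+r)^{-w}$ is a Dirichlet polynomial and hence an entire function of $w$. On the other hand, unfolding the Petersson integral of $V$ against the non-holomorphic Poincaré series attached to the cusp at $i\infty$ and inserting the spectral decomposition of $V$ over $\Gamma\backslash\HH$ --- exactly the analysis of Good --- furnishes a meromorphic continuation of $D_{r}$ to $\C$ whose poles sit at the points $w=k-\tfrac12\pm it_{j}$ coming from the Laplace eigenvalues $\tfrac14+t_{j}^{2}$ of the Maass cusp forms $u_{j}$ (together with the contribution of the continuous spectrum), the residue at such a point being a non-zero multiple of $\rho_{j}(r)\,\langle V,u_{j}\rangle$, where $\rho_{j}(r)$ is the $r$-th Fourier coefficient of $u_{j}$. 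An entire function has no poles, so all these residues must vanish. But the $r$-th Fourier mode contributed by $u_{j}$ to $V$ is governed by the very same quantity $\rho_{j}(r)\langle V,u_{j}\rangle$, and similarly for the continuous spectrum, so the simultaneous vanishing of every residue forces $A_{r}\equiv 0$. This means $a(n)a(n+r)=0$ for all $n\ge1$, contradicting the hypothesis that the sequence is not identically zero.

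The main obstacle is precisely this last implication: one must show that $A_{r}\not\equiv 0$ genuinely produces a pole, i.e.\ that the spectral expansion cannot conspire to keep $D_{r}$ entire. The cuspidal part is the easy case, since the poles at the distinct points $k-\tfrac12\pm it_{j}$ cannot cancel one another, so a single non-zero triple product $\langle V,u_{j}\rangle$ with $\rho_{j}(r)\ne0$ already suffices. The delicate point is to handle the continuous spectrum on the same footing and to verify that, when every cuspidal residue happens to vanish, the Eisenstein contribution to the $r$-th mode of $V$ still leaves a detectable singularity rather than cancelling invisibly. This, together with the convergence and continuation estimates (and the book-keeping of the Gamma factors needed to compare the two computations of the Mellin transform as meromorphic functions), which is where the hypothesis $k>2$ enters to guarantee the decay of $V$ at the cusps and the legitimacy of the spectral unfolding, is where the real work lies.
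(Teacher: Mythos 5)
Your reduction agrees with the paper's opening move: assume only finitely many $c_n = a(n)a(n+r)$ are non-zero, observe that the shifted Dirichlet series is then a Dirichlet polynomial, hence entire, and hunt for a contradiction. But the mechanism you propose for the contradiction --- the spectral decomposition of $V = y^k\abs{f}^2$ and the poles of $D$ at $k-\tfrac12 \pm it_j$ --- has a genuine gap, and it is precisely the one you flag yourself at the end. Entireness of $D$ only forbids \emph{discrete} poles, i.e.\ it forces the net cuspidal residues $\sum_{t_j=t}\rho_j(r)\langle V,u_j\rangle$ to vanish. The continuous-spectrum contribution to the $r$-th Fourier mode of $V$ is an integral over the critical line; after meromorphic continuation it is generically holomorphic, so ``$D$ has no poles'' gives you no control over it whatsoever. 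Hence the chain ``all residues vanish $\Rightarrow A_r \equiv 0$'' does not close, and without it there is no contradiction. The problem is aggravated by the generality of the theorem: for a general Fuchsian group $\Gamma$ of the first kind the Maass cuspidal spectrum may be empty or extremely sparse (Phillips--Sarnak), so the part of your argument you call easy may carry no information at all, and the entire burden falls on exactly the Eisenstein part you cannot handle. Note also that your plan never uses any positivity of the $c_n$, which turns out to be the decisive (and free) piece of information under the contradiction hypothesis.

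The paper avoids the spectral decomposition altogether. It invokes Good's functional equation for $\langle P_0(\cdot,s,r),F\rangle$, which expresses the continuation of $D$ in terms of finitely many Eisenstein inner products $\phi_i(-r;1-s)\langle E_i(\cdot,s),F\rangle$, a reflected term involving $D(k-s,r)$, and an explicit hypergeometric correction $\Delta_r(s,n)$, and then takes residues at the points $s = k+2m$, $m \in \N_0$. These points lie far to the right of $\sigma=1$, where the Eisenstein series are provably holomorphic (their poles lie in $(\tfrac12,1]$), so the continuous-spectrum data drops out \emph{for free}; the only residues come from the $\Gamma(k-s)$ factors. What survives are the elementary moment identities
\begin{equation*}
\sum_{n\geq 1} c_n (2n+r)^{2\nu} = 0 \qquad (\nu \in \N_0),
\end{equation*}
and the contradiction is then completely elementary, using that almost all $c_n$ are $\geq 0$ (automatic here, since almost all are zero): if finitely many $c_n$ were negative, dividing by the largest weight and letting $\nu \to \infty$ forces $c_n=0$ for all large $n$, and a Vandermonde determinant kills the finitely many remaining ones. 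So the paper converts ``entire'' into infinitely many linear relations at points where the troublesome spectral terms are invisible, rather than trying to read off non-vanishing from the location of poles. To rescue your route you would need to prove that a non-zero $r$-th mode of $V$ forces a genuine singularity even when every cuspidal residue vanishes --- an open-ended analytic problem that the paper's choice of residue points renders moot.
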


The following corollary is an immediate consequence of the theorem. Recall only that for $f$ 
a normalized Hecke eigenform, the Fourier coefficients are real and $a(1)=1$.
\begin{Corollary}\label{cor:levelN}
Let $f$ be a cusp form of integer weight $k>2$ and level $N$  that is a normalized Hecke eigenform
with Fourier coefficients $a(n)$ $(n\geq 1)$. Let $r \in \N$ and suppose that $a(r + 1) \neq 0$. 
Then, the sequence $\left( a(n) a(n+r) \right)_{n\geq 1}$ has infinitely many non-vanishing 
terms. 
\end{Corollary}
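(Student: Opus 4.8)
The plan is to deduce the corollary directly from Theorem~\ref{thm:main_thm}, so the only work is to check that its hypotheses hold and that the relevant sequence does not vanish identically. A normalized Hecke eigenform $f$ of level $N$ is a cusp form of integer weight $k>2$ on $\Gamma_0(N)$ (possibly with a real nebentypus, consistent with the coefficients $a(n)$ being real). I would therefore first verify that $\Gamma_0(N)$ meets conditions (i)--(iii): it is a finitely generated Fuchsian group of the first kind and contains $-I$, and since every element of $\Gamma_0(N)$ has lower-left entry divisible by $N$ while the matrix $\left(\begin{smallmatrix} 1 & b \\ 0 & 1 \end{smallmatrix}\right)$ has lower-left entry $0$ and determinant $1$, it lies in $\Gamma_0(N)$ precisely when $b\in\Z$. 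Thus $f$ falls within the scope of Theorem~\ref{thm:main_thm} with $\Gamma=\Gamma_0(N)$.

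Next I would use the normalization $a(1)=1$ recalled just before the statement. Evaluating the sequence at $n=1$ gives
\[
 a(1)\,a(1+r) = a(r+1),
\]
which is nonzero by the hypothesis $a(r+1)\neq 0$. Hence the sequence $\left(a(n)a(n+r)\right)_{n\geq 1}$ is not identically zero, which is exactly the remaining hypothesis needed to invoke the theorem.

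With both hypotheses of Theorem~\ref{thm:main_thm} in place, the theorem immediately yields that infinitely many terms of $\left(a(n)a(n+r)\right)_{n\geq 1}$ are nonzero, which is the claim. I do not expect a genuine obstacle here: the entire argument is a single substitution together with the routine check that $\Gamma_0(N)$ satisfies the standing conditions (i)--(iii). The only point requiring a moment's care is to confirm that a level-$N$ eigenform with (real) nebentypus is covered by the hypotheses of the theorem, i.e.\ that the presence of a character does not affect the Fourier expansion at $i\infty$ on which its proof rests.
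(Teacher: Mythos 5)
Your proposal is correct and matches the paper's own (implicit) argument: the paper treats the corollary as an immediate consequence of Theorem~\ref{thm:main_thm}, noting only that $a(1)=1$ forces $a(1)a(1+r)=a(r+1)\neq 0$, so the sequence is not identically zero. Your additional verification that $\Gamma_0(N)$ satisfies conditions (i)--(iii) is a routine check the paper leaves implicit, so the two proofs are essentially the same.
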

From Corollary \ref{cor:levelN}, we obtain
\begin{Corollary}\label{cor:levelOne}
Let $f$ be a normalized Hecke eigenform of integer weight $k$ on $\Gamma_1$ with Fourier coefficients 
$a(n)$  $(n\geq 1)$. Then, there are infinitely many $n$ such that $a(n)$ and $a(n+1)$ are 
both non-zero.
\end{Corollary}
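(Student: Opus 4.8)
The plan is to read Corollary~\ref{cor:levelOne} off as the special case of Corollary~\ref{cor:levelN} in which $\Gamma=\Gamma_1$, the level is $N=1$, and the shift is $r=1$. Two preliminary points make this legitimate. First, the weight restriction $k>2$ in Corollary~\ref{cor:levelN} is automatic here: a nonzero cusp form on $\Gamma_1$ exists only for even weight $k\geq 12$, so $k>2$ holds for any $f$ to which the statement could apply. Second, for a normalized Hecke eigenform one has $a(1)=1$, so the $n=1$ term of the sequence $\bigl(a(n)a(n+1)\bigr)_{n\geq 1}$ equals $a(1)a(2)=a(2)$. Thus everything reduces to the single nonvanishing assertion $a(2)\neq 0$: granting it, Corollary~\ref{cor:levelN} with $r=1$ immediately yields infinitely many $n$ with $a(n)a(n+1)\neq 0$, and I would isolate this as the only thing left to establish.

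To treat $a(2)\neq 0$ I would use the multiplicativity of the coefficients of a normalized eigenform together with the Hecke recursion $a(p^{e+1})=a(p)a(p^e)-p^{k-1}a(p^{e-1})$. If one can rule out $a(2)=0$ directly, equivalently that $0$ is not a Hecke eigenvalue of $T_2$ on the relevant space, the argument is complete. Failing a direct argument, I would lower the bar and aim only to exhibit one index $n_0$ with $a(n_0)a(n_0+1)\neq 0$: as soon as the sequence $\bigl(a(n)a(n+1)\bigr)_{n\geq 1}$ is not identically zero, Theorem~\ref{thm:main_thm} already forces infinitely many nonzero terms. The recursion gives some leverage here, since even when $a(2)=0$ one still has $a(4)=-2^{k-1}\neq 0$, and more generally $a(p^2)=a(p)^2-p^{k-1}\neq 0$ whenever $a(p)=0$; hence the support $S=\{n:a(n)\neq 0\}$ is a nonempty multiplicative set, closed under coprime products, that contains many perfect squares.

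The hard part will be precisely the case $a(2)=0$. Locating two consecutive integers inside the multiplicatively defined support $S$ is not automatic from the mere size of $S$, and a purely combinatorial search through coprime products of the $a(p^2)$ is delicate. The cleanest rescue, should a direct treatment of $a(2)$ be unavailable, is to use that a level-one eigenform is necessarily non-CM, so that the set $\{n:a(n)=0\}$ has density zero; then $S$ has density one and must contain consecutive integers, which again reduces matters to Theorem~\ref{thm:main_thm}. I expect the intended and most economical route, however, to be the verification of $a(2)\neq 0$, after which Corollary~\ref{cor:levelN} applies verbatim with $N=1$ and $r=1$.
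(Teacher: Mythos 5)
Your reduction is exactly the paper's: specialize Corollary \ref{cor:levelN} to level $N=1$ and shift $r=1$ (noting that a nonzero level-one cusp form forces $k\geq 12$, so the weight hypothesis is automatic), after which everything hinges on the single nonvanishing statement $a(2)\neq 0$. But you never prove $a(2)\neq 0$, and that statement is the entire content of the paper's proof: it is settled there by citing results of Serre and Kohnen on congruences of Fourier coefficients, which show that $a(2)$ is nonzero modulo every prime $\mathfrak{p}$ lying above $5$ (in a suitable finite extension of $\Q$), hence nonzero. Writing ``if one can rule out $a(2)=0$ directly, the argument is complete'' leaves precisely the crucial step open; it is not a routine consequence of the Hecke recursion, and ruling out the vanishing of a prime-indexed eigenvalue is in general a Lehmer-type open problem.

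Both of your fallbacks fail. The first (exhibit some $n_0$ with $a(n_0)a(n_0+1)\neq 0$ from multiplicativity alone) you concede is incomplete, and it genuinely is: if $a(2)=0$ then $a(n)=0$ for every $n$ exactly divisible by an odd power of $2$, and nothing in the recursion by itself places two \emph{consecutive} integers in the support. The second fallback rests on a false claim: for a non-CM eigenform it is \emph{not} a theorem that $\{n : a(n)=0\}$ has density zero. Serre's result gives density zero for the set of \emph{primes} $p$ with $a(p)=0$; but a single prime with $a(p)=0$ already forces $a(n)=0$ on the set of $n$ exactly divisible by an odd power of $p$, which has density $\frac{1}{p+1}>0$. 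Thus ``the zero set has density zero'' is equivalent to the nonvanishing of all prime-power coefficients, i.e.\ essentially the open statement you are trying to circumvent. Even the weaker bound your argument needs, namely support of density greater than $\tfrac12$ to force consecutive integers, is unavailable: if $a(2)=a(3)=0$, the support has density at most $\left(1-\tfrac13\right)\left(1-\tfrac14\right)=\tfrac12$, and a set of density $\tfrac12$ (such as the odd numbers) need not contain two consecutive integers. The concrete gap, then, is the proof of $a(2)\neq 0$, which the paper obtains from the cited congruence results rather than from any density or multiplicativity argument.
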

\begin{proof}
By results from \cite{Serre75} and \cite{Koh99}, the second Fourier coefficient $a(2)$ is 
non-zero modulo any prime $\mathfrak{p}$ lying above $5$ (in an appropriate finite extension of 
$\Q$), hence is non-zero. The assertion follows.
\end{proof}

The proof of Theorem \ref{thm:main_thm} which will begin in section \ref{sec:proofThm1} makes use of 
a Dirichlet series associated to the sequence $\left( a(n) a(n+r) \right)_{n\geq 1}$ and of its 
analytic properties. This series was introduced by Selberg \cite{Selb} and later studied by Good  \cite{Good}, see above.

Further, assuming $\Gamma$ is a congruence subgroup of level $N$, we use a result of Knopp, 
Kohnen and Pribitkin \cite{KKP03}, Theorem 1, on sign changes of Fourier coefficients of cusp forms 
to show the following:
\begin{Theorem}\label{thm:appl_thm}
Let $f$ be a cusp form of integer weight $k$ on $\Gamma$, with real Fourier 
coefficients $a(n)$ $(n\geq 1)$. Let $r \in \N$. Then the sequence
$\left( a(n) a(n+r) \right)_{n\geq 1}$ has infinitely many non-negative terms. 
\end{Theorem}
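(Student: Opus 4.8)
The plan is to argue by contradiction, reducing the statement to the theorem of Knopp, Kohnen and Pribitkin by exhibiting an auxiliary cusp form whose Fourier coefficients are eventually of one sign. First note that if $a(n)=0$ for infinitely many $n$, then $a(n)a(n+r)=0\ge 0$ for those $n$ and we are done; so I may assume $a(n)\neq 0$ for all $n\ge N_0$. Suppose, for contradiction, that only finitely many of the products are non-negative. Then there is an $N\ge N_0$ with $a(n)a(n+r)<0$ for all $n\ge N$. Writing $\epsilon_n=\operatorname{sign}a(n)\in\{\pm1\}$, this says $\epsilon_{n+r}=-\epsilon_n$ for every $n\ge N$; equivalently, along each residue class $c \bmod r$ the signs of $a(c),a(c+r),a(c+2r),\dots$ are eventually strictly alternating, say $\epsilon_{c+mr}=\delta_c(-1)^m$ for a fixed $\delta_c\in\{\pm1\}$ and all large $m$.

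The key idea is that this alternation can be removed by a translation once a single progression has been isolated. Here it is essential that $\Gamma$ is a congruence subgroup. Fix one residue class $c \bmod r$ and pass to the sieved form $f_c(z)=\sum_{m\ge0}a(c+mr)e^{2\pi i(c+mr)z}$, obtained as the additive average $f_c=\frac{1}{r}\sum_{j=0}^{r-1}e^{-2\pi icj/r}f(z+j/r)$. Then translate by $1/(2r)$: a direct computation gives
\[ f_c\!\left(z+\tfrac{1}{2r}\right)=e^{\pi i c/r}\sum_{m\ge0}(-1)^m a(c+mr)\,e^{2\pi i(c+mr)z}, \]
and for large $m$ one has $(-1)^m a(c+mr)=\delta_c\abs{a(c+mr)}$. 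Hence, after multiplying by the unit constant $\delta_c e^{-\pi i c/r}$, I obtain a function $\tilde g$ all of whose Fourier coefficients of large index equal $\abs{a(c+mr)}\ge0$; since $a(c+mr)\neq0$ for large $m$, $\tilde g$ is not identically zero.

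It remains to check that $\tilde g$ is a cusp form to which \citep[Theorem 1]{KKP03} applies. Both the sieving operator and the translation by a rational number carry a cusp form on the congruence subgroup $\Gamma$ to a cusp form of the same weight $k$ on a (possibly smaller) congruence subgroup, at the cost of introducing a multiplier system or nebentypus — this is the standard behaviour of additive twists and of the restriction of Fourier coefficients to an arithmetic progression. To ensure the coefficients are genuinely real, so that a sign change is meaningful, one may replace $\tilde g=\sum b(n)e^{2\pi i n z}$ by $\tilde g$ plus its complex-conjugate form $\sum\overline{b(n)}e^{2\pi i n z}$, which lies on a common congruence subgroup and whose coefficients of large index equal $2\abs{a(c+mr)}\ge0$. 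Applying \citep[Theorem 1]{KKP03} to this nonzero real cusp form yields infinitely many negative Fourier coefficients, contradicting the fact that all its coefficients of large index are non-negative. This contradiction proves the theorem.

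The main obstacle is this last paragraph: verifying that the sieved and translated form really is a cusp form on a congruence subgroup, with its multiplier system, of the precise type covered by \cite{KKP03}, and arranging its coefficients to be real. The conceptual content — that the hypothetical pattern $a(n)a(n+r)<0$ forces strict sign-alternation along arithmetic progressions, which the translation by $1/(2r)$ converts into a sequence of one sign — is short; the care lies entirely in the modular bookkeeping of levels, characters and multiplier systems.
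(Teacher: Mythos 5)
Your proof is correct and follows essentially the same route as the paper: both derive forced sign-alternation from the contradiction hypothesis, build a sieved cusp form whose real coefficients are eventually of one sign, and contradict the Knopp--Kohnen--Pribitkin theorem. The only difference is cosmetic --- the paper sieves directly onto the progression $n \equiv n_0 \pmod{2r}$, where the signs are already constant, whereas you sieve mod $r$ and then translate by $1/(2r)$ to remove the alternation (the same mechanism, a linear combination of translates $f(z+s/(2r))$, as in the paper's Lemma 1); also note your final conjugation step is vacuous, since your coefficients $\delta_c(-1)^m a(c+mr)$ are manifestly real to begin with.
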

\begin{rmk*}
 In the same way, one can prove that the sequence $(a(n)a(n+r))_{n\geq 1}$ has infinitely many non-positive terms.
\end{rmk*}
The proof of this theorem is carried out in section \ref{sec:proofThm2}. We note that, 
in fact a somewhat more general statement holds, as the requirements of \cite{KKP03}, which need to 
be considered in addition to our conditions (i)--(iii), are minimal (see the remark on p.\ \pageref{rmk:more_general} below). 

Ideally, one might hope to prove a sign change result for the sequence $a(n)a(n+r)$ $(n\geq 1)$
by combining Theorems \ref{thm:main_thm} and 
\ref{thm:appl_thm} appropriately.
However we have not been able to do this.

\section{Proof of Theorem \ref{thm:main_thm}} \label{sec:proofThm1}
We shall actually prove a more general statement than the assertion of Theorem \ref{thm:main_thm}. 

We assume that the Fourier expansion of $f$ is given by $f(z) = \sum_{n\geq 1} a(n) 
e^{2\pi i n z}$ with $a(n) \in \R$ $ (\forall n\geq 1)$. 
Let $r \in \N$ be fixed. For $n\geq  1$, set  
\[ 
c_n \vcentcolon= a(n)a(n+r).
\] 
To the sequence $(c_n)_{n\in\N}$, we attach a  Dirichlet series \citep[see][]{Good, Selb} by 
setting
\begin{equation} \label{eq:def_Dseries}
 D(s, r) \vcentcolon = \sum_{n\geq 1} \frac{c_n}{\left( n + \frac{r}{2}\right) ^s} \qquad \left( 
\sigma\vcentcolon= Re(s) > k \right). 
\end{equation}

The theorem we shall prove is the following:
\begin{BThm}\label{thm:main_bar}
Assume that $c_n \geq 0$  for almost all $n\geq 1$ and that the sum 
\[ 
D(s, r)\vcentcolon= \sum_{n \geq 1} c_n ( n + \frac{r}{2})^{-s} \qquad (\sigma > k)
\] 
in fact converges for all $s \in \C$. Then $c_n = 0$ for all $n\geq 1$. 
\end{BThm}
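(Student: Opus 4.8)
The plan is to exploit the non-negativity hypothesis through a classical positivity principle for Dirichlet series, and then to confront the resulting information with the analytic continuation of $D(s,r)$ supplied by Selberg and Good.

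First I would dispose of the finitely many indices at which $c_n$ may be negative: writing their contribution as a finite (hence entire) generalized Dirichlet polynomial and subtracting it, one reduces to the case $c_n \ge 0$ for every $n$, the truncated series still converging on all of $\C$. For such a series Landau's theorem applies: since $D(s,r)$ is by hypothesis regular (indeed convergent) on the whole plane, its abscissa of convergence is $-\infty$, so the series converges absolutely everywhere and the coefficients decay faster than any power, $c_n = O\big((n+\tfrac r2)^{-A}\big)$ for every $A>0$. In particular, reading off the value at $s=0$, the sum $D(0,r)=\sum_{n\ge 1}c_n$ is a finite \emph{non-negative} number, and it suffices to prove that it vanishes.

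Next I would pass to the associated theta-type function $g(t):=\sum_{n\ge1}c_n e^{-(n+\frac r2)t}$ ($t>0$), whose Mellin transform is $\Gamma(s)D(s,r)$. The super-polynomial decay makes $g$ smooth up to $t=0$, with $g(0^+)=\sum_{n\ge1}c_n=D(0,r)$. The point of introducing $g$ is that, up to the substitution $t=4\pi y$, it is exactly the $r$-th Fourier coefficient in $x$ of $\abs{f(x+iy)}^2$, so that its behaviour as $y\to 0^+$ is governed by the modular transformation law of $f$. This is the link to Selberg--Good: their continuation of $D(s,r)$ encodes a functional equation relating the regime $y\to\infty$ (where $g$ is trivially exponentially small) to the regime $y\to 0^+$, and the assumed regularity of $D(s,r)$ on all of $\C$ removes every pole that this continuation would otherwise produce.

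The main obstacle, and the heart of the matter, is this last step: extracting from the Selberg--Good continuation that the boundary value $D(0,r)=g(0^+)$ is forced to be zero. Here the arithmetic of $f$ is indispensable — an arbitrary non-negative sequence with an everywhere-convergent transform need not vanish, so the conclusion cannot be formal and must use that $c_n=a(n)a(n+r)$. I expect the functional equation, combined with the triviality of $g$ at $t=\infty$, to pin down the full small-$t$ expansion of $g$ and in particular to give $g(0^+)=0$; positivity then upgrades this single identity $\sum_{n\ge1}c_n=0$ to $c_n=0$ for all $n$, which is the assertion. The delicate part will be justifying that no genuine singularity of the continued $D(s,r)$ can be cancelled when $f\not\equiv 0$, i.e. that regularity everywhere really does force the spectral and period data attached to $f$ — and hence every $c_n$ — to vanish.
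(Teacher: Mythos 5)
Your proposal correctly identifies the two essential features of the problem: that positivity plus everywhere-convergence alone cannot suffice (your example-free observation that an arbitrary non-negative sequence with an entire Dirichlet series need not vanish is exactly right), and that the modular input must come through the Selberg--Good continuation of $D(s,r)$. But the step you label ``the heart of the matter'' --- extracting from that continuation the vanishing of $D(0,r)$ --- is precisely what you do not carry out; it is deferred with ``I expect the functional equation \dots to give $g(0^+)=0$'' and ``the delicate part will be justifying \dots''. This is not a detail but the entire analytic content of the theorem. In the paper it is done as follows: Good's Lemma 5 expresses $\langle P_0(\cdot,s,r), y^k\abs{f}^2\rangle$ (hence, via \eqref{eq:DfromPandf}, the function $D(k+s-1,r)$) in terms of Eisenstein-series inner products, $D(k-s,r)$, and a series $\sum_n c_n (n+\frac r2)^{-(k+s-1)}\Delta_r(s,n)$ with explicit hypergeometric kernels. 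Since $D$ is entire by hypothesis, \eqref{eq:number4} becomes an identity of meromorphic functions on all of $\C$, and taking residues at the points $s=k+2m$, $m\in\N_0$ --- where the Eisenstein terms are regular but $\Gamma(k-s)$ has poles --- produces the infinite family of moment identities $\sum_{n\geq 1} c_n (2n+r)^{2\nu}=0$ for all $\nu\in\N_0$. Nothing in your sketch produces these identities, or even the single one at $\nu=0$.

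There is also a structural flaw in the order of your reductions. You first subtract the finitely many negative terms to arrive at a non-negative sequence, and only afterwards invoke the arithmetic of $f$; but the truncated coefficients are no longer of the form $a(n)a(n+r)$, so the Selberg--Good machinery (which is an identity about the actual series $D(s,r)$, equivalently about the inner product of $P_0$ against $y^k\abs{f}^2$) says nothing about the truncated series. Conversely, if you keep the original series, the single identity you aim for, $D(0,r)=\sum_n c_n=0$, does not finish the proof when negative terms are genuinely present: it only says the positive and negative parts balance. This is exactly why the paper derives \emph{all} the moments $\sum_n c_n(2n+r)^{2\nu}=0$ and then argues elementarily --- if $c_{n_1},\dotsc,c_{n_t}$ are the negative terms, dividing by $(2n_t+r)^{2\nu}$ and letting $\nu\to\infty$ forces $c_n=0$ for $n>n_t$, and a Vandermonde determinant then kills $c_1,\dotsc,c_{n_t}$ as well. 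So beyond the missing core, your plan as structured (positivity reduction first, modularity second, one boundary value only) could not be completed even if the functional-equation step were supplied; the positivity has to be used \emph{after} the full family of linear relations is in hand, not before.
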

We postpone the proof of this Theorem, and give the proof of Theorem \ref{thm:main_thm} 
first.

\begin{proof}[Proof of Theorem \ref{thm:main_thm}]
It suffices to assume that all but a finite number of 
the coefficients $c_n$ are zero and to derive a contradiction. 

Thus, let $c_{n_1}, \dotsc, c_{n_p}$, with $n_1<\dotsc < n_p$ be those which are non-zero. 
Then, the Dirichlet series $D(s,r)$ from \eqref{eq:def_Dseries} becomes a Dirichlet polynomial
\[
D(s,r) = \sum_{i=1}^p \frac{c_{n_i}}{(n_i + \frac{r}{2})^s},
\]
and hence converges in the entire $s$-plane. Also, by hypothesis almost all of its coefficients 
are $\geq 0$. Now, by applying Theorem \ref{thm:main_bar}, we find that $c_n = 0$ for all $n\geq 1$. 
This is a contradiction, and the proof is complete.
\end{proof}

The rest of this section is dedicated to the proof of Theorem \ref{thm:main_bar}. First, we
fix some notation: 
In the following, denote by $z = x + iy$ a complex variable with real part $x$ and imaginary part $y$. 
By $\mathcal{F}$ denote a fundamental domain for the action of $\Gamma$ on $\HH$, and by 
$L^2(\Gamma\backslash \HH)$ the Hilbert space of complex-valued $\Gamma$-invariant functions on 
$\HH$ that are square integrable on $\mathcal{F}$ with respect to the invariant measure $d\nu = 
\frac{dxdy}{y^2}$. 
If $f$ and $g$ are in $L^2(\Gamma\backslash \HH)$ we denote the inner product by
\[ 
 \langle f, g \rangle = \int_{\mathcal{F}} f(z)\overline{g(z)}\, d\nu(z). 
\]
Even if $f$ and $g$ do not both belong to $L^2(\Gamma\backslash \HH)$ we continue to use the 
notation for the above integral, as long as it converges absolutely. Finally, set $F(z) 
\vcentcolon= y^k\abs{ f(z)}^2$.

\begin{proof}[Proof of Theorem \ref{thm:main_bar}]
 We loosely follow the notation used by Good in \cite{Good}.
 Fix a maximal system of $\Gamma$-inequivalent cusps, $\xi_1 
= \infty$, $\xi_2, \dotsc, \xi_\kappa$. (Note that by our assumptions on $\Gamma$, this system is 
finite and contains $\infty$.) 

For $\nP\in\N$, the Poincar\'{e} series $P_0(z,s,\nP)$
  \citep[see][p.\ 13]{Good} is defined as 
 \[
 P_0(z,s,\nP) = \frac{(\pi \nP)^{s-1/2}} {\Gamma(s+\frac12)} 
 \sum_{\gamma \in \Gamma_{\infty}\backslash \Gamma}
 \Im(\gamma z)^s e\left(\nP\Re(\gamma z)\right),
 \]
 for a complex variable $s = \sigma + it$ with $\sigma>1$. Here, $\Gamma_\infty$ stands for the 
stabilizer of $\xi_1$ in $\Gamma$, consisting of the matrices  
$M = \left(\begin{smallmatrix}  1 & b \\ 0 & 1 \end{smallmatrix}\right)$, $b\in\Z$. 

For each cusp $\xi_i$, denote by $g_i\in \Gamma_1$ an element with $g_i(\xi) = \infty$ 
and for which $\Gamma_{\xi_i}\vcentcolon= g_i^{-1} \Gamma_\infty g_i$ is the stabilizer of $\xi_i$ 
in $\Gamma$.
Then, the non-holomorphic Eisenstein series of weight zero attached to the cusp $\xi_i$ 
 (cf.\ \citep[pp.\ 104f]{Good} or e.g.\ \citep[Chapter II]{Kub}) is defined as
 \[ 
 E_i(z,s) 
 = \sum_{\gamma \in \Gamma_{\xi_i} \backslash \Gamma}  \left( \Im \left(g_i^{-1}\gamma 
z\right) 
\right)^s
 \quad(\sigma>1).
 \]
This sum converges absolutely in the half-plane $\sigma>1$. The Eisenstein series has 
meromorphic continuation to the entire $s$-plane.  Its Fourier expansion is given by 
\[
E_i(z,s) = \delta_{i,1}y^{s} + \phi_{i}(s)y^{1-s} + \sum_{m\geq 1}\phi_{i}(m; s)\,2y^{1/2} 
K_{s-1/2} (2\pi\abs{m} y) e(mx),
\]
with coefficient functions 
\[
\begin{gathered}
\phi_{i}(s) = \frac{\sqrt{\pi}\cdot \Gamma(s+\frac12)}{\Gamma(s)} L_{i}^{(0)}(s), \qquad 
\phi_{i}(m; s) = \frac{\pi^{s} \abs{m}^{s-\frac12}}{\Gamma(s)} L_{i}^{(m)}(s), \\
\quad\text{where}\quad 
L_{i}^{(m)} = \sum_{c>0} c^{-2s} \sum_{d \bmod{c}}e\left(m\frac{d}{c}\right)
\qquad \left(\bigl(\begin{smallmatrix} * & * \\ c & d\end{smallmatrix}\right)
\in \Gamma_{\xi_i}\bigr).
\end{gathered}
\]
Note that, by the functional equation of $E_i(z,s)$, one has $\phi_{i}(-m; 1-s) = 
\overline{\phi_{i}}(m, s)$.

Now, consider $\left\langle  P_0(\cdot, s, \nP), F\right\rangle$ for $\sigma > 1$. As a function in 
$s$, it has holomorphic continuation to a small neighborhood of the line $\sigma = \frac12$ and 
there, the functional equation holds \citep[cf.][Lemma 5, p.\ 115f]{Good}:
\begin{equation}\label{eq:good_one}
 \begin{aligned}
\left\langle  P_0(\cdot, s, \nP), F\right\rangle & 
 = \frac{1}{2s-1} \sum_{i=1}^\kappa \phi_i (-\nP; 1-s) \left\langle  E_i(\cdot, s), F\right\rangle 
  + \left\langle  P_0(\cdot, s-1, \nP), F\right\rangle \\
 & + \left(  4 \pi \right)^{\frac12  - k} \left( \frac{n}{4} \right)^{s - \frac12}%
\frac{\Gamma(k+s-1)}{\Gamma(s + \frac12)}
\sum_{n\geq 1} \frac{c_n}{( n + \frac{\nP}{2})^{k+s-1} } \Delta_\nP(s, n),
 \end{aligned}
\end{equation}
where 
 $\Delta_{\nP}(s,n)$ is given by \cite[see][p.\ 119]{Good},
 \begin{equation} \label{eq:delta_nsl}
 \begin{aligned}
 \Delta_\nP(s,n) \vcentcolon = 1 & - F\left( \frac{k+s-1}{2}, \frac{k+s}{2}, s+ \frac12; \left( 
\frac{\nP}{2n + \nP}\right)^2 \right)  \\
& \; - \quad\frac{\Gamma( k-s )\Gamma(s - \frac12)}{\Gamma(k+s-1)\Gamma(\frac12 -s)}
\left( \frac{4n + 2\nP}{\nP}\right)^{2s-1} \times  \\ 
& \qquad \times\left[ 
F\left( \frac{k-s}{2}, \frac{k-s+1}{2}, \frac32 - s; \left( \frac{\nP}{2n + \nP}\right)^2 \right)
- 1 \right]. 
 \end{aligned}
 \end{equation}
 Here, $F(a,b,c; z)$ is the hypergeometric function 
 \[
  F(a,b,c; z)  =
 \sum_{w=0}^\infty \frac{(a)_w (b)_w}{ (c)_w w! } z^w \quad\left( \abs{z} <1 \right), \qquad
 (a)_w = \frac{\Gamma(a + w)}{\Gamma(a)}.
 \]
From the expression in \eqref{eq:delta_nsl} one sees immediately that the series on the right hand 
side of \eqref{eq:good_one} is convergent at least for $-1<\sigma<2$. 

Also, note that the Dirichlet series satisfies 
\citep[cf.][Lemma 5 (i), p.\ 116]{Good}
\begin{equation} \label{eq:DfromPandf} 
D(s+k-1,\nP) = \left( 4 \pi \right)^{k-\frac12} \left(\frac{4}{\nP}\right)^{s-{1/2}}
\frac{\Gamma(s + \frac12)}{\Gamma( k + s -1)} 
\left\langle P_0(\cdot, s, \nP), F \right\rangle\qquad (\sigma > 1).
\end{equation}
Since by hypothesis, $D(s,\nP)$ converges for all $s \in \C$, we obtain from 
\eqref{eq:good_one}, \eqref{eq:delta_nsl} and \eqref{eq:DfromPandf} by continuation, that the identity 
 \begin{equation}\label{eq:number4} 
 \begin{aligned}
    \left( 4\pi\right)^{\frac12 - k} & \left( \frac{\nP}{4} \right)^{s-\frac12} 
\frac{\Gamma(k+s-1)}{\Gamma(s + \frac12)} D(k + s - 1, \nP) =  \\ 
 & \frac{1}{2s-1} \sum_{i=1}^\kappa \phi_{i}(-\nP; 1-s) \left\langle E_i(\cdot,s), F \right\rangle 
\\
 & \;  + (4\pi)^{\frac12-k} \left( \frac{\nP}{4}\right)^{\frac12 
- s}\frac{\Gamma(k-s)}{\Gamma(\frac{3}{2}  -s)} \cdot D(k-s,\nP) \\
& \; +  \left( 4\pi\right)^{\frac12 - k} 
\left( \frac{\nP}{4} \right)^{s-\frac12} 
\frac{\Gamma(k+s-1)}{\Gamma(s + \frac12)}
\sum_{n\geq 1} \frac{c_n}{\left( n + \frac{\nP}{2}\right)^{k+s-1}} \cdot \Delta_\nP(s,n)
 \end{aligned}
\end{equation}
holds between meromorphic functions for all $s \in \C$.

Now, take residues at $s= k  + 2m$ $(m \in \N_0)$  on both sides of \eqref{eq:number4}.
By \cite[Theorems 4.3.4, 4.3.5, p.\ 43]{Kub}, the Eisenstein series $E_i(z,s)$ are holomorphic in 
$\sigma > \frac12$ except for finitely many poles in the interval $(\frac12, 1]$, which are 
precisely the poles of the constant coefficients $\phi_{i}(s)$. It follows that the 
$\phi_{i}(\nP;s)$ are holomorphic at $s = k +2m$ and do not contribute to the residue.  

Thus, we pick up non-zero residues only from the second and third terms on the 
right-hand side of \eqref{eq:number4}; in the third term, the only non-zero contribution
comes from the second (non-constant) term of $\Delta_{\nP}(s,n)$ in \eqref{eq:delta_nsl}.
Note that $\operatorname{res}_{s=-m} \Gamma(s) = \frac{(-1)^m}{m!}$ for $m \in \N_0$. 

More precisely, we have
\begin{multline*}
0 = \left( \frac{\nP}{4} \right)^{\frac12-k-2m} \frac{1}{\Gamma(\frac{3}{2}  -k -2m)} 
\frac{1}{(2m)!}
\cdot D(-2m,\nP)  \\
 - \,\left( \frac{\nP}{4} \right)^{k+2m - \frac12} \frac{\Gamma(2k + 2m -1)}{\Gamma(k +2m + 
\frac12)} 
\sum_{n\geq 1} \frac{c_n}{\left( n + \frac{\nP}{2} \right)^{2k+2m-1}} 
\frac{(2 m n)^{-1}\cdot\Gamma(k+2m - \frac12)}{\Gamma(2k+2m-1)\cdot\Gamma(\frac12 - k -2m)} \times\\
\times \left( \frac{4n + 2\nP}{\nP}\right)^{2k + 4m -1} \left[ 
 F\left( -m,  -m + \frac{1}{2}, \frac32 - k -2m, \left( \frac{\nP}{2n + \nP}\right)^2 \right)
- 1 \right]. 
\end{multline*}
We note that 
\begin{align*}
 \Gamma\left({\textstyle\frac{3}{2}} - k -2m\right) & = \left({\textstyle\frac12} - k -2m\right)\cdot 
\Gamma\left({\textstyle\frac12} - k 
-2m\right) \\ \shortintertext{and}
\Gamma\left(k +2m +{\textstyle\frac12}\right) & = 
 \left(k + 2m -{\textstyle\frac12}\right)\cdot 
\Gamma\left( k + 2m - {\textstyle\frac12}\right).
\end{align*}
Thus, we obtain
\[
\begin{aligned}
0   =  & \left( \frac{\nP}{4} \right)^{\frac12-k-2m} \frac{1}{(\frac12 - k -2m)} D(-2m,\nP)  \\
 & - \left( \frac{\nP}{4} \right)^{k+2m - \frac12} \frac{1}{(k  + 2m - \frac12)} 
\sum_{n\geq 1} \frac{c_n}{\left( n + \frac{\nP}{2} \right)^{2k+2m-1}}\;\times \\
  &\qquad \times  \left( \frac{4n + 2\nP}{\nP}\right)^{2k +4m -1}
  \left[ 
 F\left( -m,  -m + \frac{1}{2}, \frac32 - k -2m; \left( \frac{\nP}{2n + 
\nP}\right)^2 \right)
- 1 \right]. 
\end{aligned}
\]
Whence, further 
\[
\begin{aligned}
0    = &  \left( \frac{\nP}{4} \right)^{\frac12-k-2m} D(-2m,\nP)  
  \quad + \; \left( \frac{\nP}{4} \right)^{k + 2m - \frac12} 
  \sum_{n\geq 1} \frac{c_n}{\left( n + \frac{\nP}{2} 
\right)^{2k+2m-1}}\;\times \\
  & \; \times \left( \frac{4n + 2\nP}{\nP}\right)^{2k +2m -1} \left[ 
 F\left( -m,  -m + \frac{1}{2}, \frac32 - k -2m; \left( \frac{\nP}{2n + \nP}\right)^2 \right)
- \;1\; \right]. 
\end{aligned}
\]
Also, we have
\[
 \frac{1}{\left( n + \frac{\nP}{2} \right)^{2k+2m-1}} \cdot \left( \frac{4n + 2\nP}{\nP}\right)^{2k 
+4m  -1}
= \quad \left( \frac{4}{\nP} \right)^{2k+4m-1}  \left( n  + \frac{\nP}{2} \right)^{2m}.
\]
Therefore altogether, we obtain
\[
D(-2m, \nP) =      \sum_{n \geq 1} c_n\left( n + \frac{\nP}{2}\right)^{2m}\!\!\cdot\left[\, -1 + \;
F\left( -m,  -m + \frac{1}{2}, \frac32 - k -2m; \left( \frac{\nP}{2n + \nP}\right)^2 
\right)
\right], 
\]
for all $\nP \in \N_0$. It follows that for all $\nP$
\begin{equation*}
 0 = \sum_{n\geq 1} c_n \left( 2n +  \nP \right)^{2m}  F\left( -m,  -m + \frac{1}{2}, 
\frac32 - k -2m; \left( \frac{\nP}{2n + \nP}\right)^2 \right). 
\end{equation*}

By the definition of the hypergeometric function, we have 
\[
\begin{aligned}
\MoveEqLeft F(-m, -m + {\textstyle{ \frac12} }, {\textstyle{ \frac32}} -k - 2m; z) =  \\
& \sum_{w\geq 0}
\frac{(-m) \dotsm (-m+w-1) ( - m + {\textstyle \frac12})\dotsm( - m +w-1+ {\textstyle 
\frac12})}{ ( -k - 2m + {\textstyle \frac32})\dotsm( - k -2m + {\textstyle \frac32} + w -1) }
\frac{z^w}{w!}.
\end{aligned}
\]
This is a polynomial in $z$ of degree $m$, which we denote as $P_m(z)$. All of its  
coefficients $\lambda_{w,m}$, $(0\leq w\leq  m)$ are non-zero. 
Set $\alpha_{\nu, m} = \nP^{2\nu} \lambda_{\nu,m }$ $(0\leq \nu \leq m)$ and write
$P_m(z) = \sum_{\nu = 0}^m \alpha_{\nu,m}z^{2m-2\nu}$.
We find that 
\[
\sum_{n\geq 1} c_n P_m(2n + \nP) = 0. 
\]
Since all the $\alpha_{\nu,m}$ are non-zero, we find immediately that for all $\nu \in \N_0$, 
\begin{equation}\label{eq:lin_rel_cn} 
 \sum_{n\geq 1} c_n (2n + \nP)^{2\nu} = 0.
\end{equation}
Now recall that (by hypothesis) almost all the coefficients  $c_n$ are $\geq 0$. 
Trivially, if there are non negative coefficients, since the right hand side is zero, there can be no positive coefficients, either and so
$c_n = 0$ for all $n$,  in which case we are finished. 
Thus, we assume that a finite number of coefficients,  $c_{n_1}, \dotsc, c_{n_t}$, with 
$n_1<n_2< \dotsb <n_t$ are negative.
For all $\nu\in \N_0$, we can write the equation \eqref{eq:lin_rel_cn} in the form
\[
\sum_{\substack{ n\geq 1 \\ n\neq n_1, \dotsc, n_t }} c_n (2n + \nP)^{2\nu} =
-c_{n_1}(2n_1 + \nP)^{2\nu} - \dotso  -c_{n_t}(2n_t +\nP)^{2\nu}
\]
Whence for all $\nu$, 
\[
\sum_{\substack{ n\geq 1 \\ n\neq n_1, \dotsc, n_t }} 
c_n \left(\frac{2n + \nP}{2n_t +n} \right)^{2\nu} =
-c_{n_1}\left(\frac{2n_1 + \nP}{2n_t +\nP} \right)^{2\nu} - \dotso  -c_{n_t}. 
 \]
Here, as $\nu$ grows arbitrarily large, the right-hand side is positive and bounded, 
whereas,on the left-hand side, there exist $n$ with $n > n_t$ (and with $c_n>0$), thus, the 
left-hand side becomes arbitrarily large. This is a contradiction, hence $c_n = 0$ for 
all $n>n_t$.

Now, from \eqref{eq:lin_rel_cn} we get the system of linear equations with $0\leq \nu \leq n_t -1$ 
\[
\sum_{n=1}^{n_t} c_n \bigl(2n + \nP\bigr)^{2\nu} = 0.
\]
the determinant of which is easily seen to be non-zero. 
It follows that the remaining coefficients $c_1, \dotsc, c_{n_t}$ vanish, too. Thus, to conclude, $c_n= 0$ for all $n\geq 1$.
\end{proof}

\section{Proof of Theorem \ref{thm:appl_thm}} \label{sec:proofThm2}

Recall that for this theorem, we require $\Gamma$ to be a congruence subgroup of level $N$ 
(then, condition (i) from p.\ \pageref{conditions} is satisfied automatically).
For the proof, we need the following lemma.
\begin{Lemma}\label{lemma:cusp_qeven}
 Let $f$ be cusp form with respect to $\Gamma$, with Fourier coefficients $a(n)$. 
 Let $n_0 \geq 1$. Then, the Fourier expansion
 \[
 g(z) \vcentcolon = \sum_{\substack{ n\geq 1 \\ n \equiv n_0 \bmod{2r}}} a(n) 
 e^{2\pi i n z}
 \]
 defines a cusp form for a congruence subgroup of higher level.
 \end{Lemma}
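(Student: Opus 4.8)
The plan is to realize $g$ as a finite linear combination of horizontal translates of $f$ and then to read off its modularity directly from that of $f$. Writing $M=2r$ and $\zeta=e^{2\pi i/M}$, the orthogonality relation $\frac1M\sum_{j=0}^{M-1}\zeta^{j(n-n_0)}=1$ when $n\equiv n_0\bmod M$ (and $=0$ otherwise), applied to the Fourier expansion of $f$, yields the additive-character averaging identity
\[
g(z)=\frac1M\sum_{j=0}^{M-1}\zeta^{-jn_0}\,f\!\left(z+\tfrac{j}{M}\right).
\]
I would verify this by substituting $f(z+j/M)=\sum_{n\geq1}a(n)e^{2\pi inz}\zeta^{jn}$ and interchanging the (absolutely convergent) sums. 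Each summand is a slash-translate: writing $f|_k\gamma=(\det\gamma)^{k/2}(cz+d)^{-k}f(\gamma z)$ for the weight-$k$ slash operator attached to $\gamma=\left(\begin{smallmatrix}a&b\\c&d\end{smallmatrix}\right)$, one has $f(z+j/M)=(f|_k\alpha_j)(z)$ with $\alpha_j=\left(\begin{smallmatrix}1&j/M\\0&1\end{smallmatrix}\right)\in\SL_2(\R)$, the automorphy factor being trivial for these unipotent matrices. Holomorphy of $g$ on $\HH$ and its invariance under $z\mapsto z+1$ are then immediate from the corresponding properties of $f$.

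Next I would exhibit an explicit congruence subgroup fixing every translate simultaneously. Since $\Gamma$ has level $N$ it contains $\Gamma(N)$, and I claim $\alpha_j\,\Gamma(NM^2)\,\alpha_j^{-1}\subseteq\Gamma(N)\subseteq\Gamma$ for each $j$. Granting this, set $\Gamma'=\Gamma(NM^2)=\Gamma(4r^2N)$; then for $\gamma\in\Gamma'$ the cocycle property of the slash operator gives
\[
(f|_k\alpha_j)|_k\gamma=f|_k(\alpha_j\gamma)=\bigl(f|_k(\alpha_j\gamma\alpha_j^{-1})\bigr)\big|_k\alpha_j=f|_k\alpha_j ,
\]
because $\alpha_j\gamma\alpha_j^{-1}\in\Gamma$ and $f|_k\beta=f$ for every $\beta\in\Gamma$. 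Thus \emph{each} translate $f|_k\alpha_j$ is individually $\Gamma'$-invariant, and hence so is their linear combination $g$; consequently $g$ is modular of weight $k$ on the congruence subgroup $\Gamma(4r^2N)$, which has higher level than $N$.

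The main obstacle is the single genuine computation, namely the inclusion $\alpha_j\,\Gamma(NM^2)\,\alpha_j^{-1}\subseteq\Gamma(N)$: for $\gamma=\left(\begin{smallmatrix}a&b\\c&d\end{smallmatrix}\right)\in\Gamma(NM^2)$ one checks entrywise that the conjugate $\alpha_j\gamma\alpha_j^{-1}$ has integer entries and is congruent to the identity modulo $N$. Here the factor $M^2$ rather than $M$ is exactly what is needed, since the denominator $M$ enters twice in the off-diagonal entry (once through $c/M$ and once through the overall division by $M$), so that $c\equiv0\bmod NM^2$ is required to clear it. Finally, cuspidality is inherited: slashing by $\alpha_j\in\GL_2^+(\Q)$ sends the cusp form $f$ to a cusp form $f|_k\alpha_j$ on $\alpha_j^{-1}\Gamma\alpha_j$, so every translate decays at all cusps, and a finite linear combination of functions that are holomorphic, $\Gamma'$-modular, and cuspidal at every cusp is again a cusp form. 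This shows $g\in S_k\bigl(\Gamma(4r^2N)\bigr)$, which is the assertion of the lemma.
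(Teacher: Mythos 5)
Your proposal is correct and takes essentially the same approach as the paper: both express $g$ as a root-of-unity--weighted average of the slash translates $f\mid_k\left(\begin{smallmatrix}1 & s/2r\\ 0 & 1\end{smallmatrix}\right)$ via orthogonality of additive characters, then deduce modularity on a congruence subgroup of higher level and inherited cuspidality. You merely fill in what the paper dismisses as ``clearly'' (the explicit conjugation $\alpha_j\,\Gamma(4r^2N)\,\alpha_j^{-1}\subseteq\Gamma(N)$ giving level $4r^2N$), and your normalization $\tfrac{1}{2r}$ is in fact the correct one, the paper's factor $\tfrac{1}{\phi(2r)}$ being an apparent slip.
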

  \begin{proof}
 With the usual $\mid_k$-operation, we can write $g$ in the form
\[
\begin{aligned}
g(z) & = \frac{1}{\phi(2r)}\sum_{s \bmod 2r} 
e^{- \pi i \frac{n_0 s}{r} z} 
f\mid_k 
\left( \begin{smallmatrix} 
                 1 & \frac{s}{2r} \\ 0 & 1
\end{smallmatrix} \right) \\
 & = \frac{1}{\phi(2r)} \sum_{n \geq n_0} a(n) e^{2\pi i n z} 
 \sum_{ s \bmod  2r}  e^{2\pi i\frac{(n-n_0) s}{2r}} 
 \end{aligned}
\]
Here, $\phi$ denotes the totient function, and summation runs over a minimal  system of 
representatives $\pmod{2r}$. Clearly, the right-hand side is modular with level a multiple of 
$4N$, and is cuspidal if $f$ is.
\end{proof}

Now, we are ready to prove Theorem \ref{thm:appl_thm}. 
\begin{proof}
 Set $c(n) = a(n)a(n+r)$ for all $n\geq 1$. 
 Assume that $c(n) \geq 0$ holds for only finitely many $n$. 
 Then, there is an index $n_0 \in \N$, 
 with the property that $c(n) < 0$ for all $n \geq n_0$. 

Since $c(n_0) = a(n_0)a(n_0 +r) < 0$ it follows that $a(n_0)$ and $a(n_0 + r)$ are both non-zero 
and have opposite sign. 
By induction, it follows that all coefficients $a(n)$ with $n=n_0 + 2rl$   $(l\in\N_0)$
have the same sign. Now, by Lemma \ref{lemma:cusp_qeven}, the resulting sequence of coefficients 
$(a(n))_{n\equiv n_0 (2r)}$ defines a cusp form
\[
g(z) = \sum_{\substack{ n\geq 1 \\ n \equiv n_0 \bmod{2r}} } a(n) e^{2\pi i n z} 
 \]
with real Fourier coefficients, of which, by construction only a finite number are less than or equal to zero. 
However, by the results of Knopp, Kohnen and Pribitkin \cite{KKP03}, Theorem 1, if all Fourier 
coefficients of a cusp form are real, the sequence of its coefficients has infinitely many terms of 
either sign. This is a contradiction.
\end{proof}
\begin{rmk*}\label{rmk:more_general}
Actually, for the result from \cite{KKP03} we use here, the only requirement is that the group 
mentioned in Lemma \ref{lemma:cusp_qeven}, besides being a Fuchsian group of the first kind with 
finite covolume, have $i\infty$ and $0$ as parabolic fixed points. 
However, if $\Gamma$ already satisfies conditions (i)--(iii) (see p.\ \pageref{conditions}), this imposes only a mild further restriction.
Thus, Theorem \ref{thm:appl_thm} holds somewhat more generally then only for 
congruence subgroups.
\end{rmk*}

\bibliographystyle{hplain} 
\bibliography{non-vanishing.bib}

\noindent \textsc{\small
Mathematisches Institut, Universität Heidelberg, 
Im Neuenheimer Feld 205, \newline {D-69120} Heidelberg, Germany \\}
\textit{E-mail:}  
\href{mailto:hofmann@mathi.uni-heidelberg.de}{\nolinkurl{hofmann@mathi.uni-heidelberg.de}}, 
\href{mailto:winfried@mathi.uni-heidelberg.de}{\nolinkurl{winfried@mathi.uni-heidelberg.de}}

\end{document}